\newcommand{\A}{\mathbb{A}}     
\newcommand{\X}{\mathbb{X}}      
\newcommand {\N} {\rm I\!N}
\newcommand{\ve}{\varepsilon}
\newcommand{\nexto}{\kern -0.54em}
\newcommand{\dR}{{\rm {I\ \nexto R}}}
\newcommand{\dZ}{{\cal Z \kern -0.7em Z}}
\newcommand{\dC}{{\rm\hbox{C \kern-0.8em\raise0.2ex\hbox{\vrule height5.4pt width0.7pt}}}}
\newcommand{\dQ}{{\rm\hbox{Q \kern-0.85em\raise0.25ex\hbox{\vrule height5.4pt width0.7pt}}}}
\newcommand{\ip}[2]{\langle #1,#2 \rangle}
\newcommand{\proofbox}{\hspace{\fill}{$\Box$}}
\newtheorem{theorem}{Theorem}
\newtheorem{corollary}{Corollary}
\newtheorem{proposition}{Proposition}
\newtheorem{definition}{Definition}
\newenvironment{proof}{{\bf Proof.}}{\proofbox}
\newcommand{\beq}{\begin{equation}}
\newcommand{\deq}{\end{equation}}
\newcommand{\baq}{\begin{eqnarray}}
\newcommand{\daq}{\end{eqnarray}}
\newcommand{\baqm}{\begin{eqnarray*}}
\newcommand{\daqm}{\end{eqnarray*}}
\newcommand{\BYDEF}{\,\shortstack{\textup{\tiny def} \\ = }\,}
\journal{Operations Research Letters}
\begin{document}

\begin{frontmatter}



\title{Singularly perturbed linear programs\\
and Markov decision processes}


\author[Inria]{Konstantin Avrachenkov}
\address[Inria]{Inria Sophia Antipolis, 2004 Route des Lucioles, 06902 Sophia Antipolis, France;
{\tt K.Avrachenkov@inria.fr}}
\author[Flinders]{Jerzy A. Filar}
\address[Flinders]{School of Computer Science, Engineering \& Mathematics, Flinders University, Australia}
\author[Macquarie]{Vladimir Gaitsgory}
\address[Macquarie]{Department of Mathematics, Macquarie University, Australia}
\author[Flinders]{Andrew Stillman}

\begin{abstract}
Linear programming formulations for the discounted and long-run average MDPs have evolved along separate trajectories. In 2006, E. Altman conjectured that the two linear programming formulations of discounted and long-run average MDPs are, most likely, a manifestation of general properties of singularly perturbed linear programs.  In this note we demonstrate that this is, indeed, the case.
\end{abstract}

\begin{keyword}
Markov Decision Processes (MDPs) \sep Discounted MDPs \sep Long-run average MDPs
\sep Singularly Perturbed Linear Programs \sep Limiting Linear Program
\end{keyword}

\end{frontmatter}

\section{Introduction}

The connection between linear programming and Markov Decision Processes (MDPs) was launch\-ed in the 1960's, with the papers by D'Epenoux \cite{DE60}, De Ghellinck \cite{DG60} and Manne \cite{M60}. While the linear programming formulation for the discounted MDP was relatively straightforward, extension to the long-run average, multi-chain, MDP proved challenging and required nearly two decades to arrive at a single linear program supplied, by Hordijk and Kallenberg \cite{HK79,HK84}, that completely solves such a multi-chain MDP. We refer the reader to Kallenberg \cite{K83,K02}, Puterman \cite{Puterman} and Altman \cite{A99} for excellent, comprehensive, treatments of linear programming methods for discrete time Markov decision processes. Even though the approaches to discounted and long-run average MDPs evolved along separate trajectories, Tauberian theorems provided a theoretical connection between the two cases with the discount parameter approaching unity from below; e.g. see Blackwell \cite{B62} and Veinott \cite{V66,V69}.

Parametric linear programming has a long history that is well documented in many excellent textbooks (e.g., see Murty \cite{M83}). However, majority of the so-called sensitivity analyses presented in operations research books focus on perturbations of the objective function coefficients or of the right hand side vector; sometimes extending also to changes in non-basic columns.  To the best of our knowledge, Jeroslow \cite{J73} was, perhaps, the first to consider perturbations of the entire coefficient matrix of a linear program. In the context of MDP, the results
of \cite{J73} have been applied to Blackwell optimality \cite{HDK85} and to perturbed MDPs \cite{AAF99}. In Pervozvanskii and Gaitsgori \cite{PG88} the authors focus on the singularly perturbed case where a discontinuity can arise as the perturbation parameter approaches a critical value. In the latter and in the more recent book by Avrachenkov et al \cite{AFH13} the main cause of that discontinuity has been the change in the rank of the coefficient matrix at the critical value of the perturbation parameter. Hence, it was perhaps surprising that such discontinuities can also arise when the rank does not change, as shown very recently in \cite{ABFG10}.

In 2006, Eitan Altman conjectured that the two linear programming formulations of discounted and long-run average MDPs must be a manifestation of some general properties of singularly perturbed linear programs.  In this note we demonstrate that this is, indeed, the case by first extending the results in \cite{ABFG10} and then formally applying new singular perturbation results to the MDP problem.

\section{General perturbed linear programming problem}


Consider the family of linear programming problems parameterized by
$\ve>0$:
\begin{equation}\label{pro-eps}
\begin{array}{c}
\max \ip{c^{(0)}+\ve c^{(1)}}{x}\\
\hbox{ s. t. } (A^{(0)}+\ve A^{(1)}) x = b^{(0)}+\ve b^{(1)},\\
\hbox{ \hspace{4em} } x\ge 0,
\end{array}
\end{equation}
where $c^{(0)},c^{(1)}\in \dR^n$, $b^{(0)},b^{(1)}\in \dR^m$ and
$A^{(0)},A^{(1)}\in \dR^{m\times n}$. The optimal value, the
solution set and the feasible set of Problem \eqref{pro-eps} are
denoted as $F^*(\ve), \theta^*(\ve)$ and $\theta(\ve)$,
respectively.

The goal of the perturbed linear programming approach is to construct,
if possible, a linear programming problem that does not depend on $\ve$
and such that its optimal solutions are {\it feasible limiting optimal}
for (\ref{pro-eps}) in the sense prescribed below by
Definition \ref{def-FLO}. The linear program with this property
will be called a {\em limiting LP}.

\begin{definition}\label{def-FLO} A vector $x\in \dR^n$ is called
{\em feasible limiting optimal} for the perturbed linear program
$(\ref{pro-eps})$ if $\ x \in \liminf_{\ve \downarrow 0}\theta(\ve)\ $
and $\ \lim_{\ve \downarrow 0}F^*(\ve)=\ip{c^{(0)}}{x}.\ $
\end{definition}


\medskip

\noindent Let us introduce and discuss a set of assumptions:

\medskip

\noindent {\bf Assumption ($H_0$):} {\it There exists a positive $\gamma_0$
and a bounded set $B \subset \dR^n$ such that
$\theta(\ve)\subset B$ for every $\ve\in (0,\gamma_0]$.}

\medskip

\noindent {\bf Assumption ($H_0^*$):} {\it There exists a positive $\gamma_0$
and a bounded set $B \subset \dR^n$ such that
$\theta^*(\ve)\subset B$ for every $\ve\in (0,\gamma_0]$.}

\medskip

\noindent {\bf Assumption ($H_1$):} {\it The matrix $A^{(0)}$ has rank $m$.}

\medskip

\noindent {\bf Assumption ($H_2$):} {\it For all $\ve$ sufficiently small
and positive, the rank of $A^{(0)} + \ve A^{(1)}$ is equal to $m$.}

\medskip

\noindent Note that Assumption ($H_1$) implies Assumption ($H_2$).
Also, Assumption ($H_0$) implies Assumption ($H_0^*$).

\medskip

The unperturbed problem is said to satisfy Slater condition if
\begin{equation}\label{e:Slater}
\theta(0)\cap \dR^n_{++}\neq\emptyset \ , \hbox{ where }\,\,
\dR^n_{++}\BYDEF \{x\in \dR^n \ :  \ x>0\} .
\end{equation}
In \cite{PG88}, it has been shown that if Assumptions ($H_0$) and
($H_1$) are valid and if the Slater condition (\ref{e:Slater}) is
satisfied, then the unperturbed LP is the limiting problem for the
perturbed program (\ref{pro-eps}). That is, every optimal solution
of the former is limiting optimal for the latter. In \cite{PG88}
it has also been shown that if Assumption ($H_1$) is not satisfied,
the discontinuity of $\theta(\ve)$ at $\ve = 0$ may occur. This
is a case of so-called {\em singular perturbation}. The authors
of \cite{PG88} proposed a limiting LP to deal with the case of singular
perturbation. Then, in \cite{ABFG10} it has been demonstrated that
if the Slater condition is not satisfied for the unperturbed LP,
the discontinuity of $\theta(\ve)$ at $\ve = 0$ may occur with
Assumptions ($H_0$) and ($H_1$) being satisfied. The authors
of \cite{ABFG10} have constructed a limiting LP for the case
when the Slater condition is not satisfied for the unperturbed
problem. Below we show that a result similar to that obtained in
\cite{ABFG10}  can be established with the replacement of ($H_0$) by
($H_0^*$).

Assume that ($H_1$) is satisfied and define the set
\begin{equation}\label{def-J}
J_0:=\{ i\in \{1,\ldots,n\}\::\: \exists\, x\in \theta(0) \ \hbox{
such \  that } \ x_i>0\}.
\end{equation}
According to this definition, if $j\not \in J_0$, then $x_j=0$ for
every $x\in \theta(0)$. Moreover, if $J_0 \not=\emptyset$, convexity of
$\theta(0)$ implies that there exists $\hat x\in \theta(0)$ such that
$\hat x_j>0$ for every $j\in J_0$. Note that $J_0$ can be determined
by solving $n$ independent linear programming problems $\max_{x\in
\theta(0)}x_j$, with $j =1,\ldots,n$.

Consider the following linear program
\begin{equation}\label{pro-0}
\max \{ \ip{c^{(0)}}{x^0} \ : \ x^0\in \theta_1 \}\BYDEF F_1^*,
\end{equation}
where
\begin{equation}\label{pro-0-1-1}
\theta_1\BYDEF \{x^0\ : \ \exists \ (x^0,x^1)\in \Theta_1 \},
\end{equation}
and
\begin{equation}\label{pro-0-1}
 \Theta_1 = \{(x^0,x^1)\in \dR^n\times \dR^n\ : \  x^0\in \theta(0), \ \ A^{(0)}x^1+A^{(1)}x^0=b^{(1)},
 \ \ x_j^1\ge 0 \ \forall j\not\in J_0 \}.
\end{equation}
Note that,
$$
\theta_1 \subset \theta(0) \ \ \ \hbox{ and therefore }\ \ \ \ F_1^*\leq
F^*(0).
$$

Slater condition (\ref{e:Slater}) is equivalent to having
$J_0 = \{1,2,\ldots,n\}$. If this is the case, then $\ \theta_1 =
\theta(0)\ $ (provided that Assumption ($H_1 $) is satisfied), and
the problem (\ref{pro-0}) is equivalent to the unperturbed
problem. If the Slater condition is not satisfied, these two
problems are not equivalent.

Following \cite{ABFG10}, let us
introduce the following extended version of the Slater condition.

\begin{definition}\label{Def-ES-1}
We shall say that the {\em extended Slater condition of order 1}
$($or, for brevity, ES-$1$ $)$  is satisfied if there exists
$(\hat x^0,\hat x^1)\in \Theta_1$ such that $\hat x^1_j>0$ for every
$j\not\in J_0$ and $\hat x^0_j> 0$ for every $j\in J_0$.
\end{definition}


\begin{theorem}\label{T-main-1}
Let Assumptions $(H_0^*)$ and  $(H_2)$ be satisfied. Then
\begin{equation}\label{e:basic-sol-new-7}
\limsup_{\ve \downarrow 0}\theta^*(\ve)\subset \theta_1
\end{equation}
and
\begin{equation}\label{e:basic-sol-new-7-1}
 \limsup_{\ve \downarrow 0} F^*(\ve)\leq F^*_1
\ .
\end{equation}
If, in addition, Assumption $(H_1)$  and the ES-$1$
condition are satisfied, then
\begin{equation}\label{e:basic-sol-new-8}
\limsup_{\ve \downarrow 0}\theta^*(\ve)\subset \theta_1^*,
\end{equation}
where $\theta_1^* $ is the set of optimal solutions of problem (\ref{pro-0}),
and
\begin{equation}\label{e:basic-sol-new-8-1-1}
\lim_{\ve \downarrow 0} F^*(\ve)=F^*_1 \ .
\end{equation}
Also, any optimal solution $\ x^0 $ of the problem
$(\ref{pro-0})$  is limiting optimal  for the
perturbed problem $(\ref{pro-eps})$.
\end{theorem}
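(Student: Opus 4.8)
The plan is to split the statement into an upper-bound half, which uses only $(H_0^*)$ (for boundedness, hence existence of cluster points) and $(H_2)$ (so that the perturbed programs are solvable with finite value), and an attainment half, which uses $(H_1)$ and ES-$1$. For the first inclusion \eqref{e:basic-sol-new-7} I would take any $\bar x\in\limsup_{\ve\downarrow 0}\theta^*(\ve)$, so that there are $\ve_k\downarrow 0$ and optimal points $x^{(k)}\in\theta^*(\ve_k)$ with $x^{(k)}\to\bar x$. Passing $(A^{(0)}+\ve_k A^{(1)})x^{(k)}=b^{(0)}+\ve_k b^{(1)}$, $x^{(k)}\ge 0$ to the limit gives $A^{(0)}\bar x=b^{(0)}$, $\bar x\ge 0$, i.e.\ $\bar x\in\theta(0)$. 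The key device is the rescaled sequence $y^{(k)}:=(x^{(k)}-\bar x)/\ve_k$: subtracting $A^{(0)}\bar x=b^{(0)}$ and dividing by $\ve_k$ yields $A^{(0)}y^{(k)}=b^{(1)}-A^{(1)}x^{(k)}$, and for every $j\notin J_0$ (where $\bar x_j=0$) one has $y^{(k)}_j=x^{(k)}_j/\ve_k\ge 0$.

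Now $b^{(1)}-A^{(1)}x^{(k)}\to b^{(1)}-A^{(1)}\bar x=:d$, so $d$ is a limit of points in the image of the polyhedron $\{y\in\dR^n:\,y_j\ge 0\ \forall j\notin J_0\}$ under $A^{(0)}$. Since a linear image of a polyhedron is again a polyhedron, hence closed, $d$ itself lies in that image: there is $x^1$ with $A^{(0)}x^1=d$ and $x^1_j\ge 0$ for $j\notin J_0$. Then $(\bar x,x^1)\in\Theta_1$ and $\bar x\in\theta_1$, proving \eqref{e:basic-sol-new-7}. For \eqref{e:basic-sol-new-7-1} I would choose $\ve_k\downarrow 0$ with $F^*(\ve_k)\to\limsup_{\ve\downarrow 0}F^*(\ve)$, extract $x^{(k)}\to\bar x\in\theta_1$ as above, and note $F^*(\ve_k)=\ip{c^{(0)}+\ve_k c^{(1)}}{x^{(k)}}\to\ip{c^{(0)}}{\bar x}\le F_1^*$.

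For the attainment half I would fix an optimal pair $(x^0,x^1)\in\Theta_1$ for \eqref{pro-0} and the ES-$1$ pair $(\hat x^0,\hat x^1)$, and for $\lambda\in(0,1]$ set $(x^0_\lambda,x^1_\lambda):=(1-\lambda)(x^0,x^1)+\lambda(\hat x^0,\hat x^1)$, which lies in $\Theta_1$ by convexity and, by ES-$1$, satisfies $x^0_{\lambda,j}>0$ for $j\in J_0$ and $x^1_{\lambda,j}>0$ for $j\notin J_0$. The candidate $x^0_\lambda+\ve x^1_\lambda$ leaves the equality residual $\ve^2A^{(1)}x^1_\lambda$; using the full row rank of $A^{(0)}+\ve A^{(1)}$ for small $\ve$ (which follows from $(H_1)$, hence $(H_2)$) I would solve $(A^{(0)}+\ve A^{(1)})\delta_\lambda(\ve)=-\ve^2A^{(1)}x^1_\lambda$ with $\|\delta_\lambda(\ve)\|=O(\ve^2)$ and set $x_\lambda(\ve):=x^0_\lambda+\ve x^1_\lambda+\delta_\lambda(\ve)$. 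This point meets the equality exactly, and coordinatewise its entries are $x^0_{\lambda,j}+O(\ve)>0$ for $j\in J_0$ and $\ve x^1_{\lambda,j}+O(\ve^2)>0$ for $j\notin J_0$ once $\ve$ is small, so $x_\lambda(\ve)\in\theta(\ve)$.

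Since $x_\lambda(\ve)\to x^0_\lambda$ as $\ve\downarrow 0$, this forces $\liminf_{\ve\downarrow 0}F^*(\ve)\ge\ip{c^{(0)}}{x^0_\lambda}=(1-\lambda)F_1^*+\lambda\ip{c^{(0)}}{\hat x^0}$, and letting $\lambda\downarrow 0$ gives $\liminf_{\ve\downarrow 0}F^*(\ve)\ge F_1^*$; with \eqref{e:basic-sol-new-7-1} this yields \eqref{e:basic-sol-new-8-1-1}. A diagonal choice $\lambda=\lambda(\ve)\downarrow 0$ then shows $\mathrm{dist}(x^0,\theta(\ve))\to 0$, i.e.\ $x^0\in\liminf_{\ve\downarrow 0}\theta(\ve)$, which together with $\ip{c^{(0)}}{x^0}=F_1^*=\lim_{\ve\downarrow 0}F^*(\ve)$ is exactly feasible limiting optimality in the sense of Definition~\ref{def-FLO}. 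Finally, for $\bar x\in\limsup_{\ve\downarrow 0}\theta^*(\ve)$ the first half gives $\bar x\in\theta_1$ while the value relation along a realizing sequence gives $\ip{c^{(0)}}{\bar x}=\lim_{\ve\downarrow 0}F^*(\ve)=F_1^*$, so $\bar x\in\theta_1^*$, which is \eqref{e:basic-sol-new-8}. I expect the attainment step to be the main obstacle: it is the only place where genuine feasible points of the perturbed program must be built, and the nonnegativity of the ``slow'' coordinates $j\notin J_0$ is delicate — this is precisely what ES-$1$ together with the convex-combination device rescues, while the full-rank correction $\delta_\lambda(\ve)$ keeps exact equality feasibility.
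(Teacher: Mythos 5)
Your proposal is correct, but the first half follows a genuinely different route from the paper's. For \eqref{e:basic-sol-new-7} the paper works through basic solutions: it partitions the index sets $S_m$ into bases that are nonsingular for small $\ve>0$ and those that are singular, expands each bounded basic solution $x_J(\ve)$ as a power series in $\ve$, reads off $(x^0,x^1)\in\Theta_1$ from the first two coefficients, and then passes from basic optimal solutions to all of $\theta^*(\ve)$ via convex combinations and convexity of $\theta_1$. You instead take an arbitrary cluster point $\bar x$, form the rescaled residuals $y^{(k)}=(x^{(k)}-\bar x)/\ve_k$, observe that they lie in the polyhedron $\{y:y_j\ge 0\ \forall j\notin J_0\}$ with $A^{(0)}y^{(k)}\to b^{(1)}-A^{(1)}\bar x$, and invoke closedness of the linear image of a polyhedron to produce $x^1$. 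This is more elementary and self-contained (no basis enumeration, no analyticity, and in fact no use of $(H_2)$), at the cost of not producing the expansion \eqref{t3}, which the paper's route yields as a by-product. The second half of your argument — convex combination with the ES-$1$ point, a quadratic correction term restoring exact equality feasibility, the sign analysis of coordinates in and out of $J_0$, and a diagonal choice of the combination parameter of order $\ve$ — is essentially the paper's construction, except that you build the correction $\delta_\lambda(\ve)$ directly from a right inverse of $A^{(0)}+\ve A^{(1)}$ rather than citing the existence of the bounded $x^2(\delta,\ve)$ from the earlier reference; for the uniform $O(\ve^2)$ bound you should note that $(H_1)$ guarantees a right inverse bounded uniformly for small $\ve$, and your "diagonal choice" is justified precisely because your estimates give feasibility for all $\lambda\ge c_1\ve$, matching the paper's $\delta(\ve)=c_1\ve$. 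One shared implicit assumption (also present in the paper) is that the suprema $F^*(\ve_k)$ along the relevant subsequences are attained in $\theta^*(\ve_k)$.
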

\begin{proof}
Most steps of the proof are similar to the corresponding steps of the proof of Theorem 2.1 in \cite{ABFG10}, and we will
only indicate the steps that  differ from those used in the aforementioned  proof.

Let us introduce the following notations.
Given a finite set $S$, denote by $|S|$ the number of elements of $S$.
Let $S_m:=\{J\subset \{1,2,\ldots,n\}\::\: |J|=m \}$,
so $|S_m|={n\choose m}$. Given a matrix $D\in \dR^{m\times
n}$ and an index set $J\in S_m$, the matrix $D_J\in \dR^{m\times
m}$ is constructed by extracting from $D$ the set of $m$ columns
indexed by the elements of $J$. In a similar way, given a vector
$x\in \dR^n$ and $J\in S_m$, we denote by $x_J$ the vector of
$\dR^m$ constructed by extracting from $x$ the coordinates $x_j,
\,j\in J$ (that is, $x_J \BYDEF \{ x_j\},\ j\in J $).

In Lemmas 3.1 and 3.2 of \cite{ABFG10} it was established that
$$
S_m=\Omega_1\cup \Omega_2 \ \ \ \ \ {\rm with} \ \ \ \ \ \Omega_1\cap \Omega_2=\emptyset ,
$$
where $\Omega_1 $ and  $\Omega_2 $ are defined by the equations
 $$\
 \Omega_1:=\{J\in S_m\::\: (A^{(0)}+\ve A^{(1)})_J \hbox{ is
nonsingular for } \ve \in (0,\gamma)\}\neq \emptyset ,
$$
$$\
\Omega_2:=\{J\in S_m\::\: (A^{(0)}+\ve A^{(1)})_J \hbox{ is
singular for all } \ve \in [0,\gamma)\} $$
(here and in what follows, $\gamma $ stands for a  positive number small enough).

Also, it was established that, if
\begin{equation}\label{e:basic-sol-new}
x_J(\ve):=[(A^{(0)}+\ve A^{(1)})_J]^{-1}(b_0+\ve b_1)
\end{equation}
($J\in \Omega_1$) and if
 \begin{equation}\label{e:bounded}
\limsup_{\ve \downarrow 0} \|x_J(\ve) \|<\infty ,
 \end{equation}
then $x_J(\ve) $ allows the power series expansion
\begin{equation}\label{t3}
x_J(\ve)=\sum_{l=0}^{\infty} \ve^l x^l_J \ , \ \ \ \ \ \ \ \forall \ve\in (0,\gamma).
\end{equation}
Let $\Omega_1^*\subset \Omega_1   $ be such that $J\in \Omega_1^* $ if and only if
there exists a subsequence $\ve'\rightarrow 0 $ such that the vector $x(\ve)=\{ x_j(\ve)\}, \ j=1,...,n $, the non-zero elements of which are equal to
the corresponding non-zero elements of  $x_J(\ve))=\{ x_j(\ve)\}, \ j\in J$ (with $x_J(\ve)$  being  as in (\ref{e:basic-sol-new})) satisfies the inclusion
$$
x(\ve')\in \theta^* (\ve').
$$
Since (due to Assumption $(H_0^*) $) (\ref{e:bounded}) is satisfied, $x_J(\ve))$ allows the expansion (\ref{t3}), and hence
\begin{equation}\label{t3-1}
x(\ve)=\sum_{l=0}^{\infty} \ve^l x^l \ , \ \ \ \ \ \ \ \forall \ve\in (0,\gamma),
\end{equation}
where non-basic components ($j\notin J $) are equal to zero in both left and right hand sides. From (\ref{t3-1}) it follows, in particular, that
\begin{equation}\label{t3-2}
\lim_{\ve \rightarrow 0}x(\ve)= x^0
\end{equation}
By substituting (\ref{t3-1}) into the constraints of the perturbed problem (\ref{pro-eps}), one can readily verify that
$(x^0, x^1)\in \Theta _1$. Hence $x^0\in \theta_1 $.

The argument above proves that any partial limit (cluster point) of any basic optimal solution of the problem  is contained
in $ \theta_1$. Since any element of $\theta^*(\ve) $ can be presented as a convex combination of the optimal basic solutions and since
$ \theta_1$ is convex, this proves the validity of (\ref{e:basic-sol-new-7}), which, in turn, implies (\ref{e:basic-sol-new-7-1}).

Let us now establish the validity of the second part of the theorem. Let $x^0\in \theta_1^* $
and let $x^1 $ be such that $(x^0, x^1)\in \Theta_1 $. Define
$(x^0(\delta),x^1(\delta))$ by the equation
\begin{equation}\label{e:basic-sol-new-9}
x^0(\delta)\BYDEF (1-\delta) x^0 + \delta \hat{x}^0, \ \ \ \ \ \
x^1(\delta)\BYDEF (1-\delta) x^1 + \delta \hat{x}^1, \ \ \ \ \ \
\delta \in (0,1),
\end{equation}
where $(\hat{x}^0, \hat{x}^1 ) $ are as in the ES-$1$ condition.
Note that $(x^0(\delta),x^1(\delta))\in \Theta_1$ (due to
convexity of $\Theta_1$) and also that
\begin{equation}\label{e:basic-sol-new-9-1}
x^0_j(\delta)\geq \delta \hat{x}^0_j \geq \delta a \ \ \forall
j\in J_0 \ , \ \ \   \ \ \ \ \ \ \ x^1_j(\delta) \geq \delta
\hat{x}^1_j\geq \delta a \ \ \forall j\notin J_0,
\end{equation}
where
\begin{equation}\label{e:basic-sol-new-9-2}
a \BYDEF \min\{\min_{j'\in J_0}\hat{x}^0_{j'}, \min_{j'\notin
J_0}\hat{x}^1_{j'}\} > 0.
\end{equation}
In the proof of Theorem 2.1 in \cite{ABFG10}, it has been established that there exists $x^2(\delta,\ve)$ such that
\begin{equation}\label{e-x-2-epsilon}
x(\delta,\ve)\BYDEF x^0(\delta) +\ve x^1(\delta) +\ve^2
x^2(\delta,\ve)\in \theta(\ve) \ \ \ \ \ \ \  \forall \delta\in [c_1\ve , 1), \ \ \ \  \forall \ve\in (0,\gamma)
\end{equation}
and such that
\begin{equation}\label{e:basic-sol-new-9-2-11}
||x^2(\delta,\ve)||\leq c_2  \ \ \ \ \forall \delta\in (0,1), \ \ \ \forall \ve\in (0,\gamma),
\end{equation}
where $c_1$ and $c_2$ are sufficiently large constants. Take $\delta(\ve)\BYDEF c_1\ve $. Then
\begin{equation}\label{e:basic-sol-new-9-2-12}
\tilde x(\ve)\BYDEF x(\delta(\ve),\ve)\in \theta(\ve)
 \ \ \ \forall \ve\in (0,\gamma),
\end{equation}
and
\begin{equation}\label{e:basic-sol-new-9-2-13}
\lim_{\ve\rightarrow 0}\tilde x(\ve)= x^0.
\end{equation}
Since
$$
 \ip{c^{(0)}+\ve c^{(1)}}{\tilde x(\ve)}\leq F^*(\ve),
$$
from (\ref{e:basic-sol-new-9-2-13}) it follows that
\begin{equation}\label{e:basic-sol-new-9-2-14}
F_1^* = \ip{c^{(0)}}{x^0}\leq \liminf_{\ve\rightarrow 0}F^*(\ve)
\end{equation}
(the equality being due to the fact that $ x^0 $ was chosen to be an optimal solution of (\ref{pro-0})).
The validity of (\ref{e:basic-sol-new-9-2-14}) and (\ref{e:basic-sol-new-7-1}) implies the validity of (\ref{e:basic-sol-new-8-1-1}). The latter along with
(\ref{e:basic-sol-new-7}) imply (\ref{e:basic-sol-new-8}). Finally, the fact that any optimal solution $x^0$ of the problem (\ref{pro-0})
is limiting optimal in the perturbed problem (\ref{pro-eps}) follows from (\ref{e:basic-sol-new-9-2-13}) and from that
$$
\lim_{\ve\rightarrow 0}\ip{c^{(0)}+\ve c^{(1)}}{\tilde x(\ve)}=\ip{c^{(0)}}{x^0}= F^*_1= \lim_{\ve\rightarrow 0}F^*(\ve).
$$
\end{proof}

\medskip

Instead of problem~\eqref{pro-0}, it may be more convenient to deal with the
following problem
\begin{equation}\label{pro-1}
\begin{array}{l}
\max \ip{c^{(0)}}{x^0}\\
\hbox{ s. t. } A^{(0)}x^0=b^{(0)},\\
\hbox{       } A^{(0)}x^1+A^{(1)}x^0=b^{(1)},\\
\hbox{ \hspace{4em} } x^0\ge 0,\\
\hbox{ \hspace{4em} } x^1\ge 0,\\
\end{array}
\end{equation}
the statement of which does not involve the set $J_0$. Let us give a sufficient
condition, under which the problems ~\eqref{pro-0} and \eqref{pro-1} are
equivalent  in the sense of Definition \ref{def:pro} introduced below (the latter makes
use of the fact that the objective function in \eqref{pro-0} and \eqref{pro-1} do not explicitly
depend on  $x^1$).

\begin{definition}\label{def:pro}
We will say that the problems \eqref{pro-0} and \eqref{pro-1} are {\em equivalent} when
the sets $\theta_1$ and $\tilde{\theta}_1$,
\begin{equation}\label{def:tildetheta1}
\tilde\theta_1 \BYDEF \{x^0\in \theta(0)\::\: \hbox{there exists}\
x^1\in \dR^n \hbox{ such that } A^{(0)}x^1+A^{(1)}x^0=b^{(1)},\,\,x^1\ge 0\},
\end{equation}
coincide.
\end{definition}

A sufficient condition for problems \eqref{pro-0} and \eqref{pro-1} to
be equivalent is provided by the following result.

\medskip

\begin{proposition}\label{Q}
Let  $J_0$ be as in $(\ref{def-J})$. If there exists $\alpha:=\{\alpha_j\}_{j\in J_0}$ such that
\begin{equation}\label{C2}
A^{(0)}_{J_0} \alpha =0,\, \hbox{\rm with }\alpha_j>0 \,\ \ \forall\, j\in J_0.
\end{equation}
Then problems \eqref{pro-0} and \eqref{pro-1} are equivalent.
\end{proposition}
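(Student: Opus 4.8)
The plan is to prove the set equality $\theta_1 = \tilde\theta_1$ by establishing the two inclusions separately. One inclusion is immediate from the definitions and uses no hypothesis; the whole content of the proposition resides in the reverse inclusion, for which the null vector $\alpha$ furnished by \eqref{C2} is exactly the tool needed.

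First I would dispose of the easy inclusion $\tilde\theta_1\subset\theta_1$. If $x^0\in\tilde\theta_1$, then $x^0\in\theta(0)$ and there is some $x^1\ge 0$ with $A^{(0)}x^1+A^{(1)}x^0=b^{(1)}$. In particular $x^1_j\ge 0$ for every $j\notin J_0$, so $(x^0,x^1)\in\Theta_1$ and hence $x^0\in\theta_1$. This direction uses only that the requirement $x^1\ge 0$ defining $\tilde\theta_1$ is stronger than the requirement $x^1_j\ge 0$ for $j\notin J_0$ defining $\Theta_1$.

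The substance is the inclusion $\theta_1\subset\tilde\theta_1$. Given $x^0\in\theta_1$ (so $x^0\in\theta(0)$ since $\theta_1\subset\theta(0)$), pick $x^1$ with $(x^0,x^1)\in\Theta_1$; thus $A^{(0)}x^1+A^{(1)}x^0=b^{(1)}$ and $x^1_j\ge 0$ for $j\notin J_0$, while the coordinates $x^1_j$ with $j\in J_0$ may be negative. The idea is to correct precisely these offending coordinates without disturbing the equality constraint or the coordinates that are already nonnegative. To that end I would extend $\alpha$ to a vector $\hat\alpha\in\dR^n$ by setting $\hat\alpha_j=\alpha_j$ for $j\in J_0$ and $\hat\alpha_j=0$ for $j\notin J_0$. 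Condition \eqref{C2} then reads $A^{(0)}\hat\alpha=A^{(0)}_{J_0}\alpha=0$, i.e. $\hat\alpha$ lies in the kernel of $A^{(0)}$, with $\hat\alpha_j>0$ on $J_0$ and $\hat\alpha_j=0$ off $J_0$.

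Finally I would set $\bar{x}^1\BYDEF x^1+t\,\hat\alpha$ for a scalar $t>0$ to be chosen, and verify the three required properties. Since $\hat\alpha$ is in the kernel of $A^{(0)}$, one has $A^{(0)}\bar{x}^1+A^{(1)}x^0=A^{(0)}x^1+A^{(1)}x^0=b^{(1)}$, so the equality constraint is preserved for every $t$. For $j\notin J_0$ the perturbation vanishes, giving $\bar{x}^1_j=x^1_j\ge 0$. For $j\in J_0$ we have $\bar{x}^1_j=x^1_j+t\,\alpha_j$ with $\alpha_j>0$, so taking $t\ge\max_{j\in J_0}(-x^1_j/\alpha_j)$ makes every such coordinate nonnegative. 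Then $\bar{x}^1\ge 0$, and, as $x^0\in\theta(0)$, we conclude $x^0\in\tilde\theta_1$, completing the equality. There is no genuine obstacle beyond recognizing that \eqref{C2} supplies exactly a kernel direction of $A^{(0)}$ that is strictly positive on $J_0$ and zero elsewhere, which is what is needed to absorb the unconstrained (possibly negative) $J_0$-components of $x^1$; the only mild care required is choosing the shift $t$ large enough to clear all $J_0$ coordinates simultaneously.
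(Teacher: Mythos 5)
Your proof is correct and follows essentially the same route as the paper's: the easy inclusion $\tilde\theta_1\subset\theta_1$ from the definitions, and the reverse inclusion by adding a large positive multiple of the kernel vector from \eqref{C2} (extended by zero off $J_0$) to repair the possibly negative $J_0$-components of $x^1$ without disturbing the equality constraint. The only difference is notational — you write the correction as $x^1+t\hat\alpha$ while the paper defines the corrected vector coordinatewise — so there is nothing of substance to add.
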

\begin{proof}
We must show that $\tilde\theta_1=\theta_1$. The definitions readily
imply that $\tilde\theta_1 \subset \theta_1$. Let us prove  the opposite inclusion.
Take $x^0\in \theta_1$. From the definition of $\theta_1$ it follows that $x^0\in \theta(0)$.
From this definition it also follows that there exists  $x^1=(x^1_j)\in \dR^n$ such that
\begin{equation}\label{C1}
x^1_j\ge 0,\,\forall\,j\not\in J_0,\, \ \  A^{(0)}x^1 + A^{(1)}x^0 =b^{(1)}.
\end{equation}
If $x^1\ge 0$, then by definition $x^0\in \tilde\theta_1$. Otherwise, there exist a component (or components) of $x^1$
 such that $x^1_j<0$ for $j\in J_0$. In this case, we can take
\[
t>\max_{j\in J_0,\ x^1_j<0}\{-x^1_j/\alpha_j\}>0,
\]
where $\{\alpha_j\}_{j\in J_0}$ are as in \eqref{C2}. Define $\hat{x}\in \dR^n$ as
\[
\hat{x}_j:=\left\{
\begin{array}{lr}
t \alpha_j + x^1_j& \hbox{ if } j\in J_0,\\
x^1_j& \hbox{ if } j\not\in J_0,
\end{array}\right.
\]
The definition of $t$ ensures $\hat{x}\ge 0$. Using  \eqref{C2} and \eqref{C1},
 we also have
\[
\begin{array}{rcl}
A^{(0)}\hat{x} + A^{(1)}x^0&=& A^{(0)}_{J_0} (t\alpha + x^1_{J_0}) +  A^{(0)}_{J^C_0} [x^1]_{J^C_0}  + A^{(1)}x^0\\
& =& t A^{(0)}_{J_0} \alpha  + A^{(0)}x^1 + A^{(1)}x^0= b^{(1)},
\end{array}
\]
where we used the notation $J^C_0:=\{i\::\: i\not\in J_0 \}$. The above
expression implies that $x^0\in \tilde\theta_1$, because we found a vector $\hat{x}\in \dR^n$ such that
$\hat{x}\ge 0$ and $A^{(0)}\hat{x} + A^{(1)}x^0=b^{(1)}$.
\end{proof}

\medskip

\begin{corollary}\label{Cor-100}
If $b^{(0)}=0 $, then problems \eqref{pro-0} and \eqref{pro-1} are equivalent.
\end{corollary}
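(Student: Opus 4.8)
The plan is to derive Corollary \ref{Cor-100} directly from Proposition \ref{Q}, rather than re-running any of the set-inclusion arguments. Proposition \ref{Q} already reduces the equivalence of \eqref{pro-0} and \eqref{pro-1} to the existence of a single vector $\alpha=\{\alpha_j\}_{j\in J_0}$ with $A^{(0)}_{J_0}\alpha=0$ and $\alpha_j>0$ for every $j\in J_0$. So the whole task under the extra hypothesis $b^{(0)}=0$ is to exhibit such an $\alpha$; once it is produced, the equivalence $\theta_1=\tilde\theta_1$ follows by quoting the proposition.

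To build $\alpha$, I would invoke the point $\hat x\in\theta(0)$ recorded just after \eqref{def-J}: by convexity of $\theta(0)$ there is $\hat x\in\theta(0)$ with $\hat x_j>0$ for every $j\in J_0$. The key use of the hypothesis is that, with $b^{(0)}=0$, feasibility of $\hat x$ means precisely $A^{(0)}\hat x=0$. At the same time, the definition of $J_0$ forces $\hat x_j=0$ for every $j\not\in J_0$, since such coordinates vanish on all of $\theta(0)$. Hence the columns of $A^{(0)}$ indexed by $J^C_0$ drop out of the product and $A^{(0)}\hat x=A^{(0)}_{J_0}\hat x_{J_0}=0$. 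Taking $\alpha:=\hat x_{J_0}$ then gives a vector indexed by $J_0$ with $A^{(0)}_{J_0}\alpha=0$ and $\alpha_j=\hat x_j>0$ for each $j\in J_0$, which is exactly condition \eqref{C2}, and Proposition \ref{Q} applies.

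The only case calling for separate comment is $J_0=\emptyset$, which under $b^{(0)}=0$ means $\theta(0)=\{0\}$. There the hypothesis \eqref{C2} of Proposition \ref{Q} is vacuous (the empty vector plays the role of $\alpha$), so the proposition still applies and the two problems coincide trivially. I do not anticipate any genuine difficulty: the argument rests on the elementary observation that a zero right-hand side turns a feasible point which is strictly positive exactly on $J_0$ into a strictly positive element of the kernel of $A^{(0)}_{J_0}$. The single step deserving a moment of care is the restriction passage, namely checking that one obtains $A^{(0)}_{J_0}\hat x_{J_0}=0$ and not merely $A^{(0)}\hat x=0$; but this is immediate from $\hat x_j=0$ for $j\not\in J_0$, so no real obstacle remains.
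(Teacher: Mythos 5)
Your proposal is correct and follows essentially the same route as the paper: both take the point $\hat x\in\theta(0)$ that is strictly positive exactly on $J_0$, observe that $b^{(0)}=0$ makes $\hat x_{J_0}$ a strictly positive element of the kernel of $A^{(0)}_{J_0}$, and then invoke Proposition \ref{Q} with $\alpha=\hat x_{J_0}$. Your explicit treatment of the restriction step and of the degenerate case $J_0=\emptyset$ is a minor elaboration the paper omits, but the argument is the same.
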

\begin{proof}
By the very definition of $J_0$, there exists $x_{J_0}>0$ such that
$$
A^{(0)}_{J_0}x_{J_0} = b^{(0)}=0.
$$
Thus, the role of $\alpha$ is played by $x_{J_0} $ in the present case.
\end{proof}
%

\section{Application to Markov Decision Processes}

Let us consider a discrete-time Markov Decision process (also called
a Controlled Markov Chain) with a finite state space
$\X=\{1,...,N\}$ and a finite action space $\A(i)=\{1,...,m_i\}$ for
each state $i \in \X$. At any time point $t$ the system is in one of
the states $i \in \X$ and the controller or ``decision-maker''
chooses an action $a \in \A(i)$; as a result the following occur:
(a) the controller gains an immediate reward $r_{ia}$, and (b) the
process moves to a state $j \in \X$ with transition probability
$p_{iaj}$, where $p_{iaj} \ge 0$ and $\sum_{j \in \X}p_{iaj}=1$.

A decision rule $\pi_t$ at time $t$ is a function which assigns a
probability to the event that any particular action $a$ is taken at
time $t$. In general, $\pi_t$ may depend on all history
$h_t=(i_0,a_0,i_1,a_1,...,a_{t-1},i_t)$ up to time $t$. The
distribution $\pi_t(a_t|h_t)$ defines the probability of selecting
the action $a_t$ at time $t$ given the history $h_t$.

A control (or policy) is a sequence of decision rules
$\pi=(\pi_0,\pi_1,...,\pi_t,...)$. A policy $\pi$ is called Markov
if $\pi_t(\cdot |h_t)=\pi_t(\cdot |i_t)$. If $\pi_t(\cdot
|i)=\pi_{t'}(\cdot |i)$ for all $t,t' \in \N$ then the Markov policy
$\pi$ is called stationary. It is defined by a distribution
$\pi_{ia}$, where $\pi_{ia}$ is the probability of choosing action
$a$ when the system is in state $i$. Furthermore, a deterministic
policy $\pi$ is a stationary policy whose single decision rule is
nonrandomized. It can be defined by the function $f(i)=a, a \in
\A(i)$.

Let $U$, $U^S$ and $U^D$ denote the sets of all policies, all
stationary policies and all deterministic policies, respectively. It
is known that, in many contexts, there is no loss of generality in
restricting consideration to stationary or even deterministic
policies (see e.g., \cite{Puterman}).

For any stationary policy $\pi \in U^S$ we can define the
corresponding transition matrix $P(\pi)=\{p_{ij}(\pi)\}_{i,j=1}^N$
and the reward vector $r(\pi)=\{r_{i}(\pi)\}_{i=1}^N$
$$
p_{ij}(\pi):=\sum_{a \in \A(i)}p_{iaj}\pi_{ia}, \quad
r_i(\pi):=\sum_{a \in \A(i)}r_{ia}\pi_{ia}.
$$
The expected average reward $g_i(\pi)$ and the expected discounted
reward $v_i^\alpha(\pi)$, associated with policy $\pi$, can be expressed as follows:
$$
g_i(\pi):=\lim_{T \to \infty} {1 \over T} \sum_{t=1}^T \left[
P^{t-1}(\pi)r(\pi) \right]_i
$$
and
$$
v_i^\alpha(\pi):=(1-\alpha)\sum_{t=1}^{\infty}\alpha^{t-1} \left[
P^{t-1}(\pi)r(\pi) \right]_i =(1-\alpha)\left[ (I-\alpha
P(\pi))^{-1} r(\pi) \right]_i ,
$$
respectively, where $i \in \X$ is an initial state and
$\alpha \in (0,1)$ is a discount factor.

Often an interest rate $\rho=(1-\alpha)/\alpha$ is used instead of
the discount factor. We note that the interest rate is close to zero
when the discount factor is close to 1.

The following power series expansion, so-called Blackwell series expansion
\cite{B62,Puterman}, helps to establish a relation
between discount optimality and average optimality

\beq \label{power_ser}
v_i^\alpha(\pi)=(1-\alpha)\left[\frac{g_i(\pi)}{1-\alpha}+h_i(\pi)+...\right]=
g_i(\pi)+(1-\alpha)h_i(\pi)+... \ , \deq

\noindent where $h(\pi)=(I-P(\pi)+P^*(\pi))^{-1}(I-P^*(\pi))r(\pi)$
is a so-called bias vector. We note that often the expected discount
reward vector is introduced without the factor $(1-\alpha)$. In that
case the power series (\ref{power_ser}) becomes a Laurent power
series. However, the factor $(1-\alpha)$ makes exposition of the results
easier in the context of our singular perturbation approach.

We now introduce the discount optimality and the average optimality
criteria in MDP optimization problem.

\begin{definition}
The stationary policy $\pi_*$ is called the discount optimal for
fixed $\alpha \in (0,1)$ if
$$ v_i^\alpha(\pi_*) \ge v_i^\alpha(\pi)$$
for each $i \in \X$ and all $\pi \in U^S$.
\end{definition}

\begin{definition}
The stationary policy $\pi_*$ is called the average optimal if
$$g_i(\pi_*) \ge g_i(\pi) $$
for each $i \in \X$ and all $\pi \in U^S$.
\end{definition}

The power series (\ref{power_ser}) suggests another equivalent
definition of the average optimality.

\begin{definition}
The stationary policy $\pi_*$ is called the average optimal if
$$\lim_{\alpha\uparrow 1} [v_i^\alpha(\pi_*)-v_i^\alpha(\pi)] \ge 0$$
for each $i \in \X$ and all $\pi \in U^S$.
\end{definition}

We note that this definition corresponds to the concept of limiting
optimality in the context of perturbed linear programming.

In the case of discount optimality, the optimal value vector can
be found as a solution of the following LP (see e.g., \cite{A99,Puterman}).
\beq \label{discLP}
\min_{\gamma} \sum_j \gamma_j \tilde v_j
\deq
$$\mbox{subject to}
\sum_j [\delta_{ij}-\alpha p_{iaj}] \tilde v_j \ge r_{ia}, \forall (i,a) \in \X \times \A,
$$
where $\gamma_j > 0$ and can be chosen as elements of some probability
distribution. Without loss of generality, we may assume that the additional
non-negativity constraints
$$
\tilde v_j\geq 0,\ \quad \forall \ j,
$$
are satisfied. The latter can be induced by adding a sufficiently large value $r_0 > 0$
to all immediate rewards $r_{ia}$. This transformation does not change the structure
of optimal policies.

In the case of long-run average optimality, the optimal value vector can be
found as a solution of another LP (see e.g., \cite{A99,Puterman}).
\beq \label{averLP}
\min_{\gamma} \sum_j \gamma_j \tilde v_j
\deq
$$\mbox{subject to}
\sum_j [\delta_{ij}- p_{iaj}] \tilde v_j \ge 0\ \ \forall (i,a) \in \X \times \A,
$$
$$
\tilde v_i + \sum_j [\delta_{ij}- p_{iaj}] \tilde u_j \ge r_{ia} \ \ \forall (i,a) \in \X \times \A.
$$
Again, by adding a sufficiently large value $r_0 > 0$ to all immediate rewards and noticing
that
\beq \label{discLP-2}
\sum_j \delta_{ij} = \sum_j p_{iaj} =1 \ \ \ \forall \ (i,a),
\deq
one may assume, without loss of generality, that the non-negativity constraints
$$
\tilde v_j\geq 0,\ \  \tilde u_j\geq 0 \ \ \forall \ j,
$$
are satisfied.

Our aim is to demonstrate that LP (\ref{averLP}) for the log-run average MDP can be derived
from LP (\ref{discLP}) for the discounted MDP by the formal singular perturbation methods \cite{ABFG10,PG88}.

Take $\ve \BYDEF(1-\alpha)/\alpha$. By making a change of variables $v_j = \ve/(1+\ve) \tilde v_j$, one can rewrite
the LP problem (\ref{discLP}) in the form
\beq \label{discLP1}
\min_{\gamma} \sum_j \gamma_j v_j
\deq
$$\mbox{subject to} \ \ \
\sum_j [(1+\ve)\delta_{ij} - p_{iaj}] v_j \ge \ve r_{ia}, \forall (i,a) \in \X \times \A,
$$
$$
v_j\geq 0, \ \ \ j=1,...,n.
$$
Since Theorem \ref{T-main-1} is stated for the linear programs with equality constraints, let us introduce
additional variables $\sigma_{ia}$ to transform (\ref{discLP1}) to
\beq \label{discLP1s}
\min_{\gamma} \sum_j \gamma_j v_j
\deq
$$\mbox{subject to}
\sum_j [(1+\ve)\delta_{ij} - p_{iaj}] v_j - \sigma_{ia} = \ve r_{ia}\ \ \forall (i,a) \in \X \times \A,
$$
$$
v_j \ge 0, \quad \sigma_{ia} \ge 0.
$$
Note that the  linear program above is just a particular case
of (\ref{pro-eps}) with $A^{(0)}=\{\delta_{ij}-P_{iaj} \ | \ -I\}$,
$A^{(1)}=\{\delta_{ij} \ | \ 0\}$, $b^{(0)}=\{0\}$, $b^{(1)}=\{r_{ia}\}$,
and $c^{(0)}=\{\gamma_j\}$, $c^{(1)}=0$. The problem (\ref{pro-1}) (which is equivalent to (\ref{pro-0})  due to the fact
that  $b^{(0)}=\{0\}$; see Corollary \ref{Cor-100}) can in this case
be written as follows
\beq \label{discLP1s-1}
\min_{\gamma} \sum_j \gamma_j v_j^0
\deq
\beq \label{discLP1s-1-11}\mbox{subject to} \ \ \
\sum_j [\delta_{ij} - p_{iaj}] v_j^0 - \sigma_{ia}^0 = 0 \ \ \forall (i,a) \in \X \times \A,
\deq
\beq \label{discLP1s-1-12}
v_i^0 + \sum_j [\delta_{ij} - p_{iaj}] v_j^1 - \sigma_{ia}^1 = r_{ia}\ \  \forall (i,a) \in \X \times \A,
\deq
$$
v_j^0 \ge 0, \quad \sigma_{ia}^0 \ge 0, \ \ \ \ v_j^1 \ge 0, \quad \sigma_{ia}^1 \ge 0.
$$
Note that this problem is equivalent to (\ref{averLP}) (with $v_j^0$ and $v_j^1$ playing the roles of $\tilde v_j$ and $\tilde u_j$ respectively).

\begin{theorem}
The  Assumptions $(H_0^*)$, $(H_1)$ and the ES-$1$ condition are satisfied and, hence,
the problem $(\ref{discLP1s-1})$ is limiting LP for the problem $(\ref{discLP1s})$
in the sense that $(\ref{e:basic-sol-new-8})$ and $(\ref{e:basic-sol-new-8-1-1})$ are satisfied.
\end{theorem}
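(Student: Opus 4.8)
The plan is to verify the three hypotheses $(H_0^*)$, $(H_1)$ and the ES-$1$ condition for the perturbed family $(\ref{discLP1s})$; the assertion then follows by a direct application of Theorem \ref{T-main-1}, noting that $(H_1)$ implies $(H_2)$ and that the limiting problem $(\ref{pro-1})$ used here, namely $(\ref{discLP1s-1})$, is equivalent to $(\ref{pro-0})$ by Corollary \ref{Cor-100} because $b^{(0)}=\{0\}$. Assumption $(H_1)$ is immediate: in $A^{(0)}=\{\delta_{ij}-p_{iaj}\mid -I\}$ the block $-I$ attached to the slack variables $\sigma_{ia}$ is a nonsingular $m\times m$ submatrix, so $A^{(0)}$ has full row rank $m$.

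For $(H_0^*)$ I would identify the optimal solutions of $(\ref{discLP1s})$ explicitly. Since $(\ref{discLP1s})$ is $(\ref{discLP1})$ with slacks adjoined, and the latter is obtained from $(\ref{discLP})$ through the scaling $v_j=(1-\alpha)\tilde v_j$, the $v$-part of every optimal solution equals the optimal normalized discounted value vector $v^{\alpha}$. The Blackwell expansion $(\ref{power_ser})$, applied to a Blackwell-optimal policy, gives $v^{\alpha}=g+(1-\alpha)h+\cdots\to g$ as $\alpha\uparrow 1$, i.e. as $\ve\downarrow 0$, so the $v$-components remain bounded on a right neighbourhood of $\ve=0$. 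The slacks are then pinned down by $\sigma_{ia}=\sum_j[(1+\ve)\delta_{ij}-p_{iaj}]v_j-\ve r_{ia}$ and are likewise bounded, so for each small $\ve$ the set $\theta^*(\ve)$ is a single point lying in a fixed bounded set, which is $(H_0^*)$.

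The substantive step is ES-$1$, for which I first determine $J_0$. The point $v^0=\mathbf 1$, $\sigma^0=0$ is feasible for $(\ref{discLP1s-1-11})$ since $\sum_j[\delta_{ij}-p_{iaj}]=0$, and it has every value-component strictly positive; hence all $N$ value-indices lie in $J_0$, and the only indices that can be excluded from $J_0$ are slack-indices $(i,a)$ for which $\sum_j[\delta_{ij}-p_{iaj}]v_j^0$ vanishes at every feasible $v^0$. Writing $K_0$ for the slack-indices that do belong to $J_0$, I would build the witness as follows. By convexity of $\theta(0)$ there is a feasible $\bar x^0=(\bar v^0,\bar\sigma^0)$ with $\bar v^0_j>0$ for all $j$ and $\bar\sigma^0_{ia}>0$ for all $(i,a)\in K_0$; because $b^{(0)}=\{0\}$ makes $\theta(0)$ a cone, the scaled point $\hat x^0\BYDEF t\bar x^0$ is feasible and retains these strict inequalities for every $t>0$. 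I then set $\hat v^1=0$ — a legitimate choice because no value-index carries a sign constraint in $\Theta_1$, all of them being in $J_0$ — and let $(\ref{discLP1s-1-12})$ define $\hat\sigma^1_{ia}=t\bar v^0_i-r_{ia}$. Choosing $t$ large makes $\hat\sigma^1_{ia}>0$ for every $(i,a)$, in particular for the slack-indices $(i,a)\notin K_0$ that make up the complement of $J_0$. Thus $(\hat x^0,\hat x^1)\in\Theta_1$ with $\hat x^0_j>0$ for all $j\in J_0$ and $\hat x^1_j>0$ for all $j\notin J_0$, which is exactly ES-$1$.

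With $(H_0^*)$, $(H_1)$ (hence $(H_2)$) and ES-$1$ verified, Theorem \ref{T-main-1} yields $(\ref{e:basic-sol-new-8})$ and $(\ref{e:basic-sol-new-8-1-1})$, so $(\ref{discLP1s-1})$ is the limiting LP; and since $(\ref{discLP1s-1})$ is precisely the average-reward program $(\ref{averLP})$, this confirms Altman's conjecture in this setting. I expect the ES-$1$ construction to be the main obstacle — in particular, arranging a single point at which the scaling $t$ simultaneously preserves positivity of $\hat x^0$ on $J_0$ and drives $\hat\sigma^1$ strictly positive off $J_0$ — whereas $(H_1)$ and $(H_0^*)$ are essentially bookkeeping.
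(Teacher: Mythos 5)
Your proof is correct and follows essentially the same route as the paper: verify $(H_0^*)$, $(H_1)$ and the ES-$1$ condition and then invoke Theorem~\ref{T-main-1} together with Corollary~\ref{Cor-100}, the key structural facts being the $-I$ slack block for $(H_1)$, the $(1-\alpha)$-normalization for $(H_0^*)$, and $\sum_j(\delta_{ij}-p_{iaj})=0$ to conclude that every value index lies in $J_0$. The only cosmetic differences are that the paper builds the ES-$1$ witness by adding a large constant $M$ to all components of $v^0$ and $\sigma^1$ (rather than scaling $x^0$ by $t$ and taking $\hat v^1=0$ with $\hat\sigma^1_{ia}=t\bar v^0_i-r_{ia}$), and justifies $(H_0^*)$ from boundedness of the optimal value combined with $\gamma_j>0$ rather than via the Blackwell expansion.
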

\begin{proof} Since we consider the discounted reward vector normalized by
$1-\alpha = \ve (1+\ve)$ (see (\ref{power_ser})), the optimal value of the problem (\ref{discLP1s})
remains bounded as $\ve \to 0$.
Hence, since also $\gamma_j$ are assumed to be positive, Assumption $(H_0^*)$ is satisfied.
Assumption $(H_1)$ is obviously satisfied as well (as the matrices of constraints
contain the identity matrix). Let us now prove that the ES-$1$ condition is satisfied. Denote by $J_{0,v}$ and $J_{0,\sigma}$ the sets of indices such that from the fact  $v^0= (v^0_j)\geq 0$ and  $\sigma^0= (\sigma^0_j)\geq 0$   satisfy (\ref{discLP1s-1-11}) it follows
that
$$
v^0_j = 0 \ \ \forall j\notin J_{0,v},  \ \ \ \ \ \ \ \ \ \ \sigma^0_j = 0 \ \ \forall j\notin J_{0,\sigma}.
$$
To verify the ES-$1$  condition, one needs to show that there exist
 \begin{equation}\label{e-extra-11}
 \hat v^0= (\hat v^0_j)\geq 0, \ \ \ \ \ \hat \sigma^0= (\hat \sigma^0_j)\geq 0,\ \ \ \ \  \hat v^1= (\hat v^1_j)\geq 0, \ \ \ \ \ \hat \sigma^1= (\hat \sigma^1_j)\geq 0
 \end{equation}
 that satisfy (\ref{discLP1s-1-11}), (\ref{discLP1s-1-12}) as well as the property that
 \begin{equation}\label{e-extra-12}
 \hat v^0_j > 0 \ \ \forall j\in J_{0,v}, \ \ \ \ \ \ \  \hat \sigma^0_j > 0 \ \  \forall j\in J_{0,\sigma},\ \ \ \ \ \ \
 \hat v^1_j > 0 \ \ \forall j\notin J_{0,v}, \ \ \ \ \ \ \ \hat \sigma^1_j > 0 \ \ \forall j\notin J_{0,\sigma}.
 \end{equation}
Note that $J_{0,v}^c = \emptyset,$ due to the fact that
 $\sum_j [\delta_{ij} - p_{iaj}] M = 0$ for any $M>0$ and any pair $(i,a)$.
Also, if $v^0= (v^0_j)\geq 0, \ \sigma^0= (\sigma^0_j)\geq 0,\ v^1= (v^1_j)\geq 0, \  \sigma^1= (\sigma^1_j)\geq 0$
satisfy (\ref{discLP1s-1-11}), (\ref{discLP1s-1-12}) and the inequalities
$v^0_j > 0 \  \forall j$, $\ \sigma^0_j > 0 \   \forall j\in J_{0,\sigma},$
are valid, then
$\hat v^0, \ \hat \sigma^0, \hat v^1 , \  \hat \sigma^1$, with the components defined as follows
\begin{equation}\label{e-extra-12-1}
 \hat v^0_j\BYDEF v^0_j +M \ \  \forall j, \ \ \ \ \ \ \hat \sigma^0_j\BYDEF \sigma^0_j\ \  \forall j,\ \ \ \ \ \
  \hat v^1_j\BYDEF v^1_j \ \ \forall j, \ \ \ \ \ \ \hat \sigma^1_j \BYDEF \sigma ^1_j + M \ \  \forall j,
  \end{equation}
will satisfy (\ref{discLP1s-1-11}), (\ref{discLP1s-1-12}) and (\ref{e-extra-12}), provided that
$M$ is chosen large enough.
 This completes the proof.

\end{proof}

\section{Acknowledgements}

The work of K. Avrachenkov was partially supported by ARC Discovery Grant DP120100532
and EU Project Congas FP7-ICT-2011-8-317672; the work of J. Filar was partially
supported by the ARC grant DP150100618; and the work of V. Gaitsgory was partially
supported by the ARC Discovery Grants DP130104432, DP120100532 and DP150100618.

\end{document}